\DeclareMathOperator{\codim}{codim}
\newtheorem{proposition}{Proposition}
\newtheorem{theorem}{Theorem}
\newtheorem{corollary}{Corollary}
\newtheorem{lemma}{Lemma}
\theoremstyle{definition}
\newtheorem{remark}{\textbf{Remark}}
\newtheorem{notation}{\textbf{Construction}}
\newcommand{\mgn}{\mathcal{M}_{g,n} }
\newcommand{\virg}[1]{\textquotedblleft#1\textquotedblright}
\begin{document}
\title{{Harer Stability and Orbifold Cohomology}}
\author{Nicola Pagani}
\subjclass{\noindent {{2010 MSC.} 
Primary: 14H10, 32G15, 55N32. Secondary: 14N35, 14D23, 14H37, 55P50.}}
\address{Institut f\"ur Algebraische Geometrie, Leibniz Universit\"at Hannover,
and Department of Mathematical Sciences, The University of Liverpool.}
\email{npagani@math.uni-hannover.de, pagani@liv.ac.uk}

\begin{abstract} In this paper we review the combinatorics of the twisted sectors of $\mgn$, and we exhibit a formula for the age of each of them in terms of the combinatorial data. Then we show that orbifold cohomology of $\mgn$ when $g \to \infty$ reduces to its ordinary cohomology. We do this by showing that the twisted sector of minimum age is always the hyperelliptic twisted sector with all markings in the Weierstrass points; the age of the latter moduli space is just half its codimension in $\mgn$. 
\end{abstract}

\maketitle
\section{introduction} In recent years there have been lots of new results on geometrical and topological properties of the moduli space $\mgn$  parametrizing smooth curves of genus $g$ with $n$ distinct marked points on it. When $2g-2+n>0$, this moduli space is a smooth Deligne--Mumford stack, or an orbifold, and its coarse moduli space is a quasiprojective variety of dimension $3g-3+n$. When $n>2g+2$, every marked curve is rigid, therefore the moduli space is actually a smooth quasiprojective variety.

A celebrated result states that there are isomorphisms
\begin{equation} \label{harerstability}
H^k (\mgn, \mathbb{Q}) \cong H^k(\mathcal{M}_{g+1,n}, \mathbb{Q})  \quad \textrm{when } 3k+2 \leq 2g.
\end{equation}
These isomorphisms are originarily due to Harer in \cite{harer}, but the ranges of their validity have been gradually improved over time by the efforts of different authors. This allows the definition of the \emph{stable cohomology}, denoted $H^*(\mathcal{M}_{\infty,n}, \mathbb{Q})$. The tautological classes $\kappa$ and $\psi$ are preserved by the above isomorphisms when $g$ is sufficiently large. A recent result, whose proof was completed by Madsen-Weiss in \cite{madsen}, asserts that the resulting maps
\begin{equation}\label{mw}
\mathbb{Q}[\kappa_1, \kappa_2, \ldots] \otimes \mathbb{Q}[\psi_1, \ldots, \psi_n] \to H^*(\mathcal{M}_{\infty,n}, \mathbb{Q})
\end{equation}
are also isomorphisms. (The statement that the homomorphisms \eqref{mw} are actually isomorphisms incorporates the theorem of Madsen and Weiss, which deals with the $n=0$ case, and its extension to the $n>0$ case, which follows then from Looijenga's result \cite[Proposition 2.1]{looijenga}). We refer the reader to \cite{kirwan} and \cite{wahl} for a survey of these topological results.

In the latest years, building on earlier results in topology \cite{kawasaki} and theoretical physics \cite{dhvw1}, \cite{dhvw2}, it has become clearer that when studying the geometry and topology of orbifolds, one should include in the study the \emph{twisted sectors} of the orbifold itself. We refer to \cite{alr} for an introduction to this emerging new subject. In particular, the cohomology theory of an orbifold is enriched by the so-called \emph{orbifold cohomology}, introduced by Chen and Ruan in \cite{chenruan}. As a graded vector space, the orbifold cohomology is the direct sum of the cohomology of the original orbifold and of the cohomology of the twisted sectors; the degree of the cohomology classes of each twisted sector is shifted in orbifold cohomology by (twice) a rational number called \emph{age}. This number is not of topological nature, in fact it depends on the complex structure. Its geometric significance appears in \cite{jkk} as the virtual rank of an element in the rational $K$-theory of the twisted sector also known as ``half of the normal bundle'', this element plays a key role in orbifold intersection theory. 

In this note, we introduce the twisted sectors of $\mgn$ in the combinatorial description of \cite{pagani2}, \cite{pt}, we write a closed formula for the age of the twisted sectors of $\mgn$ (special cases of this formula are in \cite{pagani2} for $\mathcal{M}_{2,n}$ and in \cite{pt} for $\mathcal{M}_g$). Our main result is Theorem \ref{teoremaeta}, which states that for fixed $(g,n)$, the twisted sector of minimum age is the hyperelliptic twisted sector with marked Weierstrass points. It is a well-known and classical fact, which we review in Proposition \ref{codimensione}, that the twisted sectors of $\mathcal{M}_{g,n}$ have codimension higher than $g-2+n$, with equality only for the hyperelliptic locus. Our novel contribution here is that the virtual rank of ``half of the normal bundle'' (see above) is strictly greater than $\frac{g-2+n}{2}$, with equality only for the hyperelliptic twisted sector. This inequality might have further geometric consequences, besides the implications in orbifold cohomology investigated in this note. (The study of the age of the twisted sectors of various types of moduli spaces of curves, has also recently played a significant role in the investigation of the singularities of the coarse moduli space.)

Combining Theorem \ref{teoremaeta} with Harer stability, we obtain that the orbifold cohomology of $\mgn$ stabilizes. Combining further our main result with the theorem of Madsen--Weiss, we can explicitly compute the orbifold cohomology of $\mgn$ in low degrees. Indeed, from Theorem \ref{teoremaeta}, we deduce
\begin{equation} \label{stablecoom}
H^k_{orb}(\mgn, \mathbb{Q})= H^k(\mgn, \mathbb{Q}), \quad \textrm{if } k < g-2+n, \textrm{ or } n>2g+2.
\end{equation}
(There are no twisted sectors of $\mgn$ if and only if $n>2g+2$).

The stabilization of orbifold cohomology was conjectured by Fantechi in the discussion following her talk \cite{fantechi} at \texttt{MSRI}. We acknowledge her for the insight in this topic. We also thank Stefano Maggiolo for having significantly improved the computer program that plays a role at the end of the proof of our main result. The author was supported by DFG project Hu 337/6-2. 
%

\section{The twisted sectors of $\mathcal{M}_{g,n}$ and their age}
In this section we review the combinatorics of the twisted sectors of $\mgn$. This description of the twisted sectors of $\mgn$ was obtained in \cite{pagani2} for $n\geq 1$ or $g=2$, and in \cite{pt} for the remaining cases $\mathcal{M}_{g,0}$, $g \geq 3$.

Let us fix $(g,n)$ with $2g-2+n>0$. A $(g,n)$-admissible datum consists of non-negative integers $(g', N; d_1, \ldots, d_{N-1}, a_1, \ldots, a_{N-1})$, with $N \geq 2$ and such that

\begin{equation} \label{rh}
2g-2=N(2g'-2)+ \sum_{i=1}^{N-1} (N- \gcd(i,N)) d_i,
\end{equation}
\begin{equation} \label{fan}
\sum_{i=1}^{N-1} i \ d_i \equiv 0 \pmod N,
\end{equation}
\begin{equation} \label{markedpoints}
\sum_{i=1}^{N-1} a_i=n, \quad a_i \leq d_i, \quad a_i=0 \textrm{ if } \gcd(i,N) \neq 1. 
\end{equation}
\begin{equation} \label{nonempty}
n=g'=0 \implies \textrm{the g.c.d. of } N \textrm{ and of the } i\textrm{'s such that } d_i \neq 0 \textrm{ is } 1.
\end{equation}

Each $(g,n)$-admissible datum corresponds to $\binom{n}{a_1, \ldots, a_{N-1}}$ twisted sectors of $\mgn$ that are related each to the other by an $(a_1, \ldots, a_{N-1})$-permutation of the $n$ marked points. Since we will only investigate properties of the twisted sectors of $\mgn$ that do not depend on this permutation, from now on we shall slightly abuse the notation and identify each twisted sector $Y$ of $\mgn$ with its $(g,n)$-admissible datum
\begin{displaymath}
Y \sim (g', N; d_1, \ldots, d_{N-1}, a_1, \ldots, a_{N-1}).
\end{displaymath} 
These facts follow from  \cite[Proposition 2.13]{pagani2} for $n \geq 1$ and from \cite[Corollary 2.16, Theorem 2.19]{pt} in the case $n=0$. 

We observe that, from condition \eqref{rh}, there are no $(g,n)$-admissible data when $n> 2g+2$; in particular, this is the case when $g$ equals $0$.

For completeness, we briefly recall our description of the twisted sectors of $\mgn$, from which the above correspondence follows. For more details, we refer to  \cite[Section 2.b]{pagani2} for the case $n \geq 1$ and to \cite[Section 2.b]{pt} for the case $n=0$. 
\begin{notation} 
A twisted sector of $\mgn$ parametrizes connected cyclic covers of order $N$ of curves of genus $g'$ with total space a curve of genus $g$, where the $n$ marked points are chosen among the points of total ramification. The branch divisor of the cyclic cover splits into $N-1$ divisors, some of which are possibly empty. Indeed to any point $p$ in the branch divisor $D$, let $q$ be any point in the fiber of $p$ under the cyclic cover map; we define $H_p$ as the stabilizer of the action of $\mathbb{Z}/N\mathbb{Z}$ at $q$, and $\psi_p$ as the character of the action of $H_p$ on the cotangent space in $q$. Then, for $0 <i<N$, we define $D_i$ as the subset of $D$ of those $p$ such that $H_p$ equals the subgroup generated by $i$ in $\mathbb{Z}/N\mathbb{Z}$, and such that $\psi_p(i)$ equals $\omega_N$, a fixed generator for $\mu_N$: the group of $N$-th roots of $1$. In addition to $(g,g',N)$, the admissible datum consists of $d_i:=|D_i|$ and of $a_i$, the number of chosen marked points in the preimage of $D_i$ under the cyclic cover map. 

Given a $(g,n)$-admissible datum, we can construct a moduli space of cyclic covers as in the paragraph above. Condition \eqref{rh} is Riemann--Hurwitz formula, condition \eqref{fan} is a compatibility condition that guarantees the existence of a (not necessarily connected) cyclic cover with the data $d_i$ and $N$, condition \eqref{markedpoints} corresponds to the fact that the marked points must be points of total ramification for the cover.  Now if $n\geq 1$, it is easy to see that the total space of the cover is forced to be connected and that the moduli space parametrizing such covers is also connected. If instead $n=0$, it is shown in \cite[Theorem 2.19]{pt} that there is always one connected component of the moduli space that parametrizes \emph{connected} cyclic covers. This component may possibly be empty only when $g'=0$, condition \eqref{nonempty} rules out precisely these cases. \label{constr1}
\end{notation}


Let us fix a twisted sector $(g', N; d_1, \ldots, d_{N-1}, a_1, \ldots, a_{N-1})$. Since $Y$ admits a finite map to $\mathcal{M}_{g', \sum d_i}$, its dimension is $3g'-3+ \sum d_i$ and its \emph{codimension in} $\mathcal{M}_{g,n}$ is
\begin{equation} \label{codim}
\codim(Y):= 3g-3g'- \sum_{i=1}^{N-1} d_i+n,
\end{equation}
its \emph{twin} is  $(g', N; d_{N-1}, \ldots, d_1, a_{N-1}, \ldots, a_1)$.
If $(g,n)$ is fixed and $n\leq2g+2$, the \emph{hyperelliptic twisted sector with $n$ marked Weierstrass points} is $(g'=0,N=2;d_1=2g+2,a_1=n)$. In short, we will also call it simply the \emph{hyperelliptic twisted sector}, from \eqref{codim} it has codimension $g-2+n$. We now review the following well-known fact.
\begin{proposition} \label{codimensione} The codimension of any twisted sector $Y$ of $\mathcal{M}_{g,n}$ satisfies $\codim(Y)\geq g-2+n$, with equality if and only if $Y$ is the hyperelliptic twisted sector with $n$ marked Weierstrass points.
\end{proposition}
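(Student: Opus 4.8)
The plan is to translate the asserted codimension bound into a purely arithmetic inequality on the admissible datum, and then analyse that inequality according to the value of $N$. First I would combine the formula \eqref{codim} for $\codim(Y)$ with the Riemann--Hurwitz relation \eqref{rh}: substituting $2g-2=N(2g'-2)+\sum_{i}(N-\gcd(i,N))d_i$ into the identity $\codim(Y)-(g-2+n)=2g-3g'-\sum_i d_i+2$ yields
\begin{equation}
\codim(Y)-(g-2+n)=(2N-3)\,g'+\sum_{i=1}^{N-1}\bigl(N-1-\gcd(i,N)\bigr)d_i-2(N-2).
\end{equation}
Each coefficient $N-1-\gcd(i,N)$ is non-negative, and it vanishes exactly when $N=2$. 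Thus the Proposition is equivalent to saying that this right-hand side is $\ge 0$, with equality only in the hyperelliptic case.

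For $N=2$ the right-hand side is simply $g'$, so the inequality holds and is an equality precisely when $g'=0$. When $N=2$ and $g'=0$, relation \eqref{rh} forces $d_1=2g+2$ and condition \eqref{markedpoints} forces $a_1=n$, so $Y$ is exactly the hyperelliptic twisted sector with $n$ marked Weierstrass points; conversely that sector realises equality. This identifies all equality cases, so it remains to prove the \emph{strict} inequality for every $N\ge 3$.

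For $N\ge 3$ each coefficient $N-1-\gcd(i,N)$ is at least $\tfrac{N}{2}-1$, since every proper divisor of $N$ is at most $\tfrac{N}{2}$. If $g'\ge 1$ then already $(2N-3)g'\ge 2N-3>2(N-2)$, and we are done. If $g'=0$, I would use \eqref{rh} once more to rewrite the middle sum as $(2g-2+2N)-D$ with $D:=\sum_i d_i$; as all quantities are integers, strict inequality is equivalent to $D\le 2g+1$. Note $g\ge 1$ here, since there are no admissible data with $g=0$. The coefficient bound gives $\tfrac{N}{2}D\le 2g-2+2N$, hence $D\le 4+\tfrac{4(g-1)}{N}\le 4+\tfrac{4(g-1)}{3}$, which is strictly less than $2g+2$ as soon as $g\ge 2$, so $D\le 2g+1$ there. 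For $g=1$ the estimate only gives $D\le 4$; but $D=4$ would force $\tfrac{N}{2}D\le 2g-2+2N$ to be an equality, hence $N$ even, $d_{N/2}=4$ and all other $d_i=0$, whence $n=\sum_i a_i=0$ (since $\gcd(N/2,N)=N/2\ne 1$), contradicting the standing hypothesis $2g-2+n>0$. So $D\le 3=2g+1$ in that case as well.

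The hard part is precisely this borderline family with $N\ge 3$, $g'=0$ and small genus, where the crude bound $N-\gcd(i,N)\ge\tfrac{N}{2}$ on the ramification contributions is not strict; ruling out the extremal configuration forces one to use the integrality of the numbers $N-\gcd(i,N)$ together with $2g-2+n>0$ (equivalently, the connectedness condition \eqref{nonempty}, and one may also invoke \eqref{fan} to clear the values $D\le 3$ directly). Once strictness is proved for all $N\ge 3$, combining it with the $N=2$ computation gives both the inequality $\codim(Y)\ge g-2+n$ and the fact that equality holds if and only if $Y$ is the hyperelliptic twisted sector with $n$ marked Weierstrass points.
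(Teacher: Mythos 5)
Your argument is correct, and it rests on the same ingredients as the paper's proof: the codimension formula \eqref{codim}, Riemann--Hurwitz \eqref{rh}, and the elementary bound $\gcd(i,N)\le N/2$ for $1\le i\le N-1$. The difference is organizational. The paper eliminates $\sum d_i$ at once via \eqref{servedopo} and collapses the whole statement to the single inequality $(2N-4)(g-1)+Ng'\ge 0$, dispatched in a few lines; you instead keep $D=\sum d_i$ and run a case analysis on $N$, $g'$ and $g$ after rewriting $\codim(Y)-(g-2+n)$ as $(2N-3)g'+\sum_i\bigl(N-1-\gcd(i,N)\bigr)d_i-2(N-2)$, which is precisely the rearrangement the paper uses later for the right-hand side of \eqref{centrale} in the proof of Theorem \ref{teoremaeta}. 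What your longer route buys is a more complete treatment of equality: the paper's final inequality is also an equality when $g=1$, $g'=0$, $N\ge 3$, and the paper does not spell out why this produces no further equality cases (one must additionally have equality in \eqref{servedopo}, forcing $N$ even and $d_{N/2}=4$, hence $n=0$ by \eqref{markedpoints}, contradicting $2g-2+n>0$ --- exactly the configuration you exclude, and the one the paper only rules out explicitly inside the proof of Theorem \ref{teoremaeta}). Two minor remarks: your appeal to ``there are no admissible data with $g=0$'' is both needed and legitimate, since the paper records this observation right after \eqref{nonempty}; and your closing paragraph is loose --- the hypothesis $2g-2+n>0$ is not ``equivalent'' to \eqref{nonempty}, and \eqref{fan} is not actually needed --- but nothing in the argument preceding it depends on those remarks.
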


\begin{proof}
Using formula \eqref{codim}, our statement is reduced to proving the inequality:
\begin{equation}
\sum d_i \leq 2g - 3 g' +2.
\end{equation}
Using Formula \eqref{rh}, we have that:
\begin{equation}\label{servedopo}
\frac{N}{2} \sum d_i \leq  \sum d_i (N-\gcd(i,N))= 2g-2 -N (2g'-2),
\end{equation}
therefore, it is enough to show that:
\begin{displaymath}
\frac{2}{N} (2g-2-N (2g'-2)) \leq 2 g - 3 g' + 2.
\end{displaymath}
Or, rearranging the terms, that:
\begin{equation}
(2 N -4) (g-1) + N g' \geq 0.
\end{equation}
This is clearly always true. Equality holds if and only if $g'$ equals $0$ and $N$ equals $2$. 
\end{proof}

Every twisted sector $Y$ is assigned a rational number, first defined by Chen--Ruan in \cite{chenruan}, which is called \emph{degree shifting number}, \emph{age}, or \emph{fermionic shift}. Orbifold cohomology is then the direct sum of the ordinary cohomology and of the cohomology of all the twisted sectors, where the latter is shifted in degree by twice the age. For completeness, we briefly review the Chen--Ruan definition of degree shifting number, building on Construction \ref{constr1}. 

\begin{notation} \label{defage} Let $f \colon Y \to \mathcal{M}_{g,n}$ be the natural map from the twisted sector to the moduli stack of curves. 
The group $\mu_N$ of $N$th roots of $1$ acts on $f^*(T_{\mathcal{M}_{g,n}})$, the action can be diagonalized, and each eigenvalue at a point of $Y$ has the form $\lambda_k= e^{2 \pi i \alpha_k}$, where the $\alpha_k \in [0, 1[ \cap \mathbb{Q}$ are the \virg{logarithms of the eigenvalues}. It is not difficult to see that the function $\sum_k \alpha_k$ is (well defined) and constant on $Y$, thus the \emph{age} of $Y$ is defined as
\begin{equation} \label{age} 
a(Y):= \sum_k \alpha_k \in \mathbb{Q}.
\end{equation}
 Moreover, by the very definition of twisted sector, the action of $\mu_N$ on $T_Y$ is trivial, thus in the definition \eqref{age} it is equivalent to sum the \virg{logarithms of the eigenvalues} of the normal bundle $N_Y \mathcal{M}_{g,n}$, where the latter is defined by the exact sequence of vector bundles 
 \begin{displaymath}
0 \to T_Y \to f^*(T_{\mathcal{M}_{g,n}}) \to N_Y \mathcal{M}_{g,n} \to 0.
\end{displaymath}
The age of a twisted sector can be interpreted as the virtual rank of an element in the rational $K$-theory of $Y$ that plays an important role in orbifold intersection theory, see \cite[Definition 1.3, Sections 1.3 and 4]{jkk}.
 \end{notation}

The age of a twisted sector of $\mgn$ can explicitly be determined in terms of its admissible datum. From \cite[Proposition 5.6]{pt} and \cite[Lemma 4.6]{pagani2}, we have the following formula for the age:
\begin{align}\label{etamg}
 a(Y) =& \frac{(3 g'-3)(N-1)}{2}+ \frac{1}{N} \sum_{\gcd(i,N)=1} a_i \sum_{k=1}^{N-1} k \sigma(k,i)+   \\ \nonumber &+ \frac{1}{N} \sum_{i=1}^{N-1} d_i \sum_{k=1}^{N-1} k \left( \left\{\frac{k i}{N}\right\} + \sigma(k,i) \right) ,
\end{align}
where $\{x\}:= x-\lfloor x \rfloor$ denotes the fractional part of $x\in \mathbb{Q}^+$, and \begin{displaymath}\sigma(k,i):=\begin{cases}0 &  ki+\gcd(i,N)\equiv 0 \pmod N \\ 1 & k i + \gcd(i,N) \not\equiv 0 \pmod N.\end{cases} \end{displaymath}

Using only \eqref{etamg}, it is an easy exercise to check that, if $Y$ and $Y'$ are twins, the following holds:
\begin{equation} \label{etacodim}
a(Y)+ a(Y')= \codim(Y)= \codim(Y').
\end{equation}
For example, when a twisted sector $Y$ is twin to itself (this happens always, for example, when $N=2$), its age is half its codimension.

\section{The twisted sectors of minimum age} 

Using only the combinatorial description of the previous section, and in analogy with Proposition \ref{codimensione}, we can prove the main result of this note. From now on, we assume $2g-2+n>0$.
\begin{theorem} \label{teoremaeta} The age of any twisted sector $Y$ satisfies $2 a(Y)\geq g-2+n$, with equality if and only if $Y$ is the hyperelliptic twisted sector with $n$ marked Weierstrass points. 
\end{theorem}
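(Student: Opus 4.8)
The plan is to leverage Proposition \ref{codimensione} and formula \eqref{etacodim} to dispose of everything except a direct estimate of \eqref{etamg}. First I would handle the case in which $Y$ is twin to itself: then \eqref{etacodim} gives $2a(Y)=\codim(Y)$, so Proposition \ref{codimensione} yields $2a(Y)\ge g-2+n$ with equality exactly for the hyperelliptic twisted sector with $n$ marked Weierstrass points (and this equality forces $N=2$, $g'=0$). Since $N=2$ always produces a self-twin, this settles all $N=2$ sectors. So I may assume $Y$ is not twin to itself, hence $N\ge 3$; by Proposition \ref{codimensione} one then even has $\codim(Y)\ge g-1+n$, so writing $Y'$ for the twin and using $a(Y)+a(Y')=\codim(Y)$, the twin with the larger age automatically satisfies $2a(Y)\ge\codim(Y)>g-2+n$. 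Thus the remaining task is to prove the \emph{strict} inequality $2a(Y)>g-2+n$ for every twisted sector with $N\ge 3$.

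\emph{Setting up the estimate.} Using Riemann--Hurwitz \eqref{rh} to eliminate $g$, together with $n=\sum_i a_i$ from \eqref{markedpoints}, I would rewrite
\[
g-2+n \;=\; N(g'-1)-1+\tfrac12\sum_{i=1}^{N-1}\bigl(N-\gcd(i,N)\bigr)d_i+\sum_{i=1}^{N-1}a_i,
\]
and substitute this, along with \eqref{etamg}, into $2a(Y)-(g-2+n)$. Collecting terms according to which entry of the admissible datum they multiply, the difference takes the form
\[
2a(Y)-(g-2+n)\;=\;A(g',N)\;+\;\sum_{\gcd(i,N)=1}a_i\,B(i,N)\;+\;\sum_{i=1}^{N-1}d_i\,C(i,N),
\]
where $A(g',N)=(g'-1)(2N-3)+1$, and $B(i,N)$, $C(i,N)$ are explicit rationals assembled from $N$, $\gcd(i,N)$ and the arithmetic sums $\sum_k k\,\sigma(k,i)$ and $\sum_k k\{ki/N\}$. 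Each such sum is controlled by elementary number theory, exactly in the spirit of the proof of Proposition \ref{codimensione}; the key input is again $\gcd(i,N)\le N/2$, which bounds the number of indices $k$ with $\sigma(k,i)=0$.

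\emph{The estimates, the $g'=0$ case, and the residual check.} The coefficient $A(g',N)$ is $\ge 1$ whenever $g'\ge 1$, and equals $4-2N<0$ when $g'=0$; the $B(i,N)$ lie in $(-1,1)$; and for $C(i,N)$ one gets a lower bound that is strictly positive for all $N$ above a small explicit bound (growing like $N/4$), using $\sum_k k\{ki/N\}\ge 0$ and the $\gcd$ bound. To combine these I would use $a_i\le d_i$ to absorb any negative $B(i,N)$ into the matching term $d_i\,C(i,N)$, and in the delicate case $g'=0$ (where $A\sim -2N$) I would invoke \eqref{rh} once more: with $g'=0$ it reads $\sum_i(N-\gcd(i,N))d_i=2g-2+2N$, forcing $\sum_i d_i$ to be large enough that $\sum_i d_i\,C(i,N)$ strictly exceeds $2N-4-\sum a_i B(i,N)$. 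This proves the strict inequality for all but finitely many pairs $(N,g')$ with $N$ small, together with the finitely many admissible data of small total branching $\sum_i d_i$; those remaining cases are checked directly, which is the role of Maggiolo's computer program. Strictness for $N\ge3$ is then immediate, since equality would in particular force $A(g',N)=0$, impossible for $N\ge 3$. I expect the main obstacle to be making the lower bound for $C(i,N)$ sharp enough that only a genuinely finite, explicitly enumerable list of exceptional $(N,g')$ and small-branching data survives for the computer check, and handling cleanly the sign loss from $A(g',N)$ in the $g'=0$ regime via Riemann--Hurwitz.
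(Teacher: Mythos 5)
Your reduction (dispose of self-twins via Proposition \ref{codimensione} and \eqref{etacodim}, then prove strict inequality for $N\ge 3$) is sound, and your decomposition $2a(Y)-(g-2+n)=A(g',N)+\sum a_iB(i,N)+\sum d_iC(i,N)$ is, after worst-casing $B$ by $-\frac{N-2}{N}$ and $C$ by its minimum over each gcd class, literally the paper's inequality \eqref{finalina}; so up to that point you have reconstructed the paper's strategy in an equivalent form. The genuine gap is in the step where you claim that, for $g'=0$, Riemann--Hurwitz forces $\sum d_i$ to be large enough that $\sum d_i\,C(i,N)$ strictly exceeds $2N-4-\sum a_iB(i,N)$, leaving only a finite list for the computer. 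Riemann--Hurwitz with $g'=0$ only gives $\sum d_i\ge 3$, and per-index bounds on $C$ cannot close this case no matter how sharp you make them: already the exact value for a branch index with $\gcd(i,N)=N/2$ is $C(N/2,N)=\frac{N-2}{2}$, so three branch points yield at best about $\frac{3(N-2)}{2}<2N-4$ for every $N>2$, and with $\sum d_i=4$ concentrated at $N/2$ one gets exactly $2N-4$, i.e.\ borderline equality. Hence infinitely many $N$ survive your estimates and no ``genuinely finite, explicitly enumerable list'' is reached; you acknowledge this as the main obstacle, but it is precisely the hard content of the theorem. The paper closes it by exploiting the \emph{joint} constraints on the three gcd's coming from \eqref{rh} and \eqref{fan} (the system \eqref{condizioni}: $\sigma_1+\sigma_2+\sigma_3<N$, $a\sigma_1+b\sigma_2+c\sigma_3=N$, divisibility), reducing to a handful of extremal triples such as $\left(\frac N7,\frac N3,\frac N2\right)$ --- where the inequality is essentially sharp, so there is no slack for a cruder argument --- together with a separate treatment of the equality case $d_{N/2}=4$ (it forces $g=1$, hence $n\ge1$, contradicting \eqref{markedpoints}), and only then to the finite range $N\le 36$ checked by the program.

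Two smaller points. Your closing remark that ``equality would in particular force $A(g',N)=0$'' is a non sequitur: vanishing of the sum $A+\sum a_iB+\sum d_iC$ does not force the single term $A$ to vanish; this is harmless only because your stated plan is to prove the strict inequality directly for $N\ge3$, in which case no such argument is needed. Also, the hint that $C(i,N)$ can be bounded using $\sum_k k\{ki/N\}\ge 0$ and $\gcd(i,N)\le N/2$ does give a lower bound of rough size $\frac N4$, but, as explained above, a linear-in-$N$ per-index bound is exactly what is \emph{not} enough when $\sum d_i=3$ or $4$, so this cannot substitute for the case analysis keyed to \eqref{condizioni}.
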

\noindent The marked hyperelliptic twisted sector is, using the terminology established in the previous section, twin to itself. Therefore its age is half its codimension: $\frac{g-2+n}{2}$.

From this, the following corollary relevant for orbifold cohomology follows:
\begin{equation} \label{stabilization}
H^k(\mathcal{M}_{g,n}, \mathbb{Q})= H^k_{orb}(\mathcal{M}_{g,n}, \mathbb{Q}) \quad\textrm{if } k < g-2+n, \textrm{ or } n> 2g+2.
\end{equation}
There are no twisted sectors of $\mathcal{M}_{g,n}$ if and only if $n>2g+2$; otherwise our bound on the cohomological degree $k$ is sharp.

Using the stability results for ordinary cohomology, we deduce:
\begin{corollary} \label{corollario} The isomorphisms \eqref{harerstability} are, in fact, isomorphisms
\begin{displaymath}H^k_{orb} (\mgn, \mathbb{Q}) \cong H^k_{orb}(\mathcal{M}_{g+1,n}, \mathbb{Q}),  \quad \textrm{when } k \leq \min (g-3+n, 2g/3-2/3).
\end{displaymath}

\end{corollary}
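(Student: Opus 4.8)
My plan is to follow the template of the proof of Proposition \ref{codimensione}, working entirely from the explicit age formula \eqref{etamg}. First I would dispose of the \emph{self-twin} sectors: if $Y$ is twin to itself --- which, as noted right after \eqref{etacodim}, happens in particular whenever $N=2$ --- then \eqref{etacodim} gives $2a(Y)=\codim(Y)$, so Proposition \ref{codimensione} yields at once $2a(Y)\geq g-2+n$, with equality exactly for the hyperelliptic twisted sector. It therefore remains to treat sectors with $Y\neq Y'$; these all have $N\geq 3$, none of them is the hyperelliptic sector, and for each of them $\codim(Y)>g-2+n$ strictly (by the equality discussion in the proof of Proposition \ref{codimensione}). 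So for those I must prove the \emph{strict} inequality $2a(Y)>g-2+n$.

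Next I would reduce this to a purely combinatorial inequality. Substituting \eqref{etamg} into $2a(Y)-(g-2+n)$, using \eqref{rh} to eliminate $g$ in favour of $(g',N,d_\bullet)$ and \eqref{markedpoints} to write $n=\sum_i a_i$, and collecting terms, the quantity $2a(Y)-(g-2+n)$ becomes a linear expression
\[
(g'-1)\,\lambda_N+\sum_{i=1}^{N-1}d_i\,\nu_{i,N}+\sum_{\gcd(i,N)=1}a_i\,\mu_{i,N}+\epsilon_N,
\]
with $\lambda_N,\nu_{i,N},\mu_{i,N},\epsilon_N$ explicit rationals assembled from $\tfrac1N\sum_k k\{ki/N\}$ and $\tfrac1N\sum_k k\,\sigma(k,i)$. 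The structural input I would exploit is that, for fixed $i$, $\sigma(\cdot,i)$ vanishes at exactly $\gcd(i,N)$ of the values $k\in\{1,\dots,N-1\}$, and these form an arithmetic progression of common difference $N/\gcd(i,N)$; this gives closed forms for the coefficients, from which one checks that $\lambda_N>0$, that each $\nu_{i,N}$ is positive and comparable --- up to a multiplicative constant depending only on $N$ --- to the Riemann--Hurwitz weight $N-\gcd(i,N)$, and that $\mu_{i,N}+\nu_{i,N}>0$ whenever $\gcd(i,N)=1$, even though individual $\mu_{i,N}$ may be negative.

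I would then close the argument. Using $a_i\leq d_i$ from \eqref{markedpoints} to absorb any negative $\mu_{i,N}$ into the corresponding $\nu_{i,N}$, the expression above is at least $(g'-1)\lambda_N+\epsilon_N$ plus a strictly positive multiple (depending only on $N$) of $\sum_i d_i(N-\gcd(i,N))=2g-2-N(2g'-2)$, by \eqref{rh}. When $g'\geq 1$ the term $(g'-1)\lambda_N\geq 0$, and since a sector with $Y\neq Y'$ has some $d_i\neq 0$, positivity follows. When $g'=0$ the awkward term is $-\lambda_N+\epsilon_N<0$, but then \eqref{rh} forces $2g-2-N(2g'-2)=2g-2+2N$ to be large, so the positive part grows linearly in $g$ and overtakes it once $g\geq g_0(N)$. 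Because an order-$N$ automorphism of a genus-$g$ curve forces $N\leq 4g+2$, and $n\leq 2g+2$, only finitely many triples $(g,n,N)$ fall outside this estimate; those I would check directly from \eqref{etamg} with the help of the computer program mentioned in the acknowledgements.

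The hard part will be precisely the residual small-genus regime with $g'=0$ and $N\geq 3$: there a fixed negative contribution from the base genus competes with a positive contribution from the branch divisor whose total Riemann--Hurwitz weight is pinned down by \eqref{rh} but whose distribution among characters of differing $\gcd(i,N)$ is unconstrained, so no single clean estimate covers every case --- hence the unavoidable finite verification. One must also handle the equality clause with care, confirming that among the sectors with $Y\neq Y'$ none attains $2a(Y)=g-2+n$.
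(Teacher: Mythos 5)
You have not actually proved the Corollary. What you sketch is a re-derivation of the age bound, i.e.\ of Theorem \ref{teoremaeta}, which is already available and is what the paper simply cites: the Corollary is the two-line combination of that theorem with Harer stability. Concretely, the missing steps are: (i) from $2a(Y)\geq g-2+n$ (applied to both $\M_{g,n}$ and $\M_{g+1,n}$, the latter needing only $k\leq g-3+n< (g+1)-2+n$) the twisted summands contribute to orbifold cohomology only in degrees $\geq g-2+n$, so $H^k_{orb}(\M_{g,n},\mathbb{Q})=H^k(\M_{g,n},\mathbb{Q})$ and likewise in genus $g+1$ for $k\leq g-3+n$; (ii) the Harer isomorphisms \eqref{harerstability} apply exactly when $3k+2\leq 2g$, i.e.\ $k\leq 2g/3-2/3$. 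Neither \eqref{harerstability} nor the degree-shift argument appears anywhere in your proposal, yet they are the entire content of the statement you were asked to prove.

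Moreover, the route by which you re-prove the age bound has a genuine gap at its quantitative core. Your plan is to bound $2a(Y)-(g-2+n)$ below by $(g'-1)\lambda_N+\epsilon_N$ plus ``a strictly positive multiple $c_N$ (depending only on $N$) of $\sum_i d_i(N-\gcd(i,N))$''. A uniform constant $c_N$ is forced down to roughly $5/6$ by branch types with $\gcd(i,N)=1$ (e.g.\ $i=N-1$, whose normalized coefficient tends to $5/6$), whereas the binding sectors are those with $g'=0$, three branch points and large gcds, such as gcds $\bigl(\tfrac N7,\tfrac N3,\tfrac N2\bigr)$: there \eqref{rh} gives $2g-2+2N\approx 2N+\tfrac N{42}$ while one needs to beat $2N-4$, so the true margin is only of order $N/42$ and any $c_N$ bounded away from $1$ fails for every large $N$. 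Hence your claim that ``only finitely many triples $(g,n,N)$ fall outside this estimate'' is unjustified — under your reduction the exceptional set is infinite, and it cannot be disposed of by a computer check. (Your coefficient claims are also not uniformly correct: $\nu_{1,2}=0$, as it must be since the hyperelliptic sector realizes equality, though $N=2$ is excluded by your first step.) This is exactly why the paper's proof of Theorem \ref{teoremaeta} does not use a single constant per $N$: Lemma \ref{generalized} keeps the dependence on $\sigma=\gcd(i,N)$ through $f_N(\sigma)$, uses the twin symmetry to bound $|a(Y)-a(Y')|$ as in \eqref{centrale}, exploits concavity of $f_N$ and an extremal-triple analysis as in \eqref{finalina2} to settle all $N>36$, and only then checks the finitely many remaining cases by computer.
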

 In particular, we can interpret this by saying that orbifold cohomology of $\mathcal{M}_{g,n}$ \virg{trivially stabilizes} when $g \to \infty$, and the stable orbifold cohomology of $\mathcal{M}_{g,n}$ coincides with its ordinary stable cohomology. The only pairs $(g,n)$ for which $2g/3-2/3 > g-3+n$ occur when $(g,n) \in  \{(1,1), (2,0), (3,0), (4,0)\}$. In these special cases, the ranges for $k$ in Corollary \ref{corollario} are optimal, whereas in all other cases our ranges coincide with the ranges of stability for ordinary cohomology: $k < 2g/3-2/3$. The latter ranges are known to be optimal when $g\equiv 2 \pmod 3$. More details on the sharpness of the ranges for cohomological stability can be found in \cite[p.2]{wahl}. 

Combining \eqref{stabilization} with the isomorphisms \eqref{mw}, we see how the orbifold cohomology of $\mgn$ is explicitly computable in low degrees.

We now move to the proof of Theorem \ref{teoremaeta}. Thanks to Proposition \ref{codimensione} and to \eqref{etacodim}, what we have to prove is in fact
\begin{equation} \label{centrale}
|a(Y)- a(Y')| \leq \codim(Y) - (g-2+n)= 2g+2-3g'- \sum d_i,
\end{equation}
with equality only when $Y$ is the hyperelliptic twisted sector with $n$ marked Weierstrass points.%

We introduce some notation. Let $\Sigma$ be the set of proper divisors of $N$
\begin{displaymath}
\Sigma:= \{d \in \mathbb{N}| \ d \textrm{ divides } N, d \neq N\},
\end{displaymath} and let
\begin{align}  
a(Y)_{mark} &:=\frac{1}{N} \sum_{\gcd(i,N)=1} a_i \sum_{k=1}^{N-1} k \ \sigma(k,i),\\
a(Y)_{\sigma}&:=  \frac{1}{N} \sum_{\gcd(i,N)=\sigma} d_i \sum_{k=1}^{N-1} k \left( \left\{\frac{k i}{N}\right\} + \sigma(k,i) \right)\label{asigma}
\end{align}
We can rewrite formula \eqref{etamg} for the age of a twisted sector $Y$ as: \begin{displaymath}a(Y)=\frac{(3 g'-3)(N-1)}{2}+ a(Y)_{mark} +\sum_{\sigma \in \Sigma} a(Y)_{\sigma}.\end{displaymath} The term $a(Y)_{mark}$ is the contribution to the age of $Y$ coming from the marked points, and as such it is zero when $n=0$. Of course now we have the estimate
\begin{equation} \label{asigma1}
|a(Y)- a(Y')| \leq \left|a(Y)_{mark} - a(Y')_{mark}\right|+ \sum_{\sigma \in \Sigma} \left|a(Y)_{\sigma} - a(Y')_{\sigma}\right| .
\end{equation}
We can give estimates for each term in the right hand side of \eqref{asigma1}.
\begin{lemma}\label{generalized} The following inequalities hold:
\begin{align} \label{generalizedeq}
|a(Y)_{mark}-a(Y')_{mark}|& \leq \frac{N-2}{N} \sum_{\gcd(i,N)=1} a_i, \\
|a(Y)_{\sigma}-a(Y')_{\sigma}|& \leq \frac{(N-2 \sigma) (\frac{N}{\sigma}+5)}{6 N} \sum_{\gcd(i,N)= \sigma} d_i. \label{generalizedeq1}
\end{align}
\end{lemma}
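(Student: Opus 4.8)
The strategy is to reduce both inequalities to statements about the twinning operation $i \mapsto N-i$ on a single summand, and then bound the per-summand discrepancy by an elementary estimate on sums of the form $\sum_{k=1}^{N-1} k \{ki/N\}$ and $\sum_{k=1}^{N-1} k\,\sigma(k,i)$. For the first inequality, I note that $a(Y)_{mark}$ is a sum over $i$ with $\gcd(i,N)=1$ of $\frac{a_i}{N}\sum_{k=1}^{N-1} k\,\sigma(k,i)$, and the twin $Y'$ simply swaps $a_i \leftrightarrow a_{N-i}$. So $|a(Y)_{mark} - a(Y')_{mark}| \le \frac{1}{N}\sum_{\gcd(i,N)=1} a_i \cdot \bigl|\sum_k k\,\sigma(k,i) - \sum_k k\,\sigma(k,N-i)\bigr|$, and it suffices to show each bracket is at most $N-2$. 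Since $\sigma(k,i) \in \{0,1\}$, the sum $\sum_{k=1}^{N-1} k\,\sigma(k,i)$ lies between its value when $\sigma$ vanishes as often as possible and when it never vanishes; for $\gcd(i,N)=1$ the condition $ki+1 \equiv 0 \pmod N$ has exactly one solution $k_0 \in \{1,\dots,N-1\}$, so $\sum_k k\,\sigma(k,i) = \frac{N(N-1)}{2} - k_0$, and the twin has $\sum_k k\,\sigma(k,N-i) = \frac{N(N-1)}{2} - k_0'$ where $k_0' = N - k_0$. Hence the bracket equals $|k_0' - k_0| = |N - 2k_0| \le N-2$, since $k_0 \in \{1,\dots,N-1\}$, with equality only at the extremes $k_0 = 1$ or $k_0 = N-1$. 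That gives \eqref{generalizedeq}.

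For the second inequality I would treat each divisor $\sigma = \gcd(i,N)$ separately and write $i = \sigma j$, $N = \sigma m$ with $\gcd(j,m)=1$ and $1 \le j \le m-1$; the twin replaces $j$ by $m-j$. The summand $\frac{d_i}{N}\sum_{k=1}^{N-1} k(\{ki/N\} + \sigma(k,i))$ needs to be compared with its $j \mapsto m-j$ counterpart. The key computation is to evaluate, or at least tightly bound, the difference $\sum_{k=1}^{N-1} k\bigl(\{k\sigma j/N\} + \{k\sigma(m-j)/N\}\bigr)$ together with the $\sigma(k,\cdot)$ terms. Using $\{x\} + \{-x\} = 1$ whenever $x \notin \mathbb{Z}$ and $=0$ otherwise, the fractional-part contributions from $Y$ and $Y'$ nearly cancel in pairs, leaving a controlled residue; the $\sigma(k,i)$ terms are handled as in the first case. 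Collecting everything, the per-summand bound should come out to $\frac{(N-2\sigma)(\frac{N}{\sigma}+5)}{6N}$, where the factor $N - 2\sigma$ reflects that the difference vanishes when $\sigma = N/2$ (i.e. $Y$ is its own twin in that block), and the $\frac{1}{6}(\frac{N}{\sigma}+5)$ arises from the quadratic growth of $\sum k\{kj/m\}$-type sums over the $m = N/\sigma$ residues.

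The main obstacle is the second inequality: unlike the $\gcd = 1$ case, where $\sigma(k,i)$ vanishes at a single point and $\{ki/N\}$ does not appear, here both the fractional-part sum and the $\sigma$-sum contribute, and one must extract a bound that is simultaneously sharp (equality for the hyperelliptic sector, where $N=2$, $\sigma=1$, giving $\frac{0 \cdot 6}{12} = 0$ — so in fact the hyperelliptic case must be recovered via the $a(Y)_{mark}$ term and the fact that there the twin is trivial) and linear in $\sum d_i$. I expect the cleanest route is to prove a one-variable lemma: for $\gcd(j,m)=1$, $1 \le j \le m-1$, the quantity $\bigl|\sum_{k=1}^{m-1} k(\{kj/m\} + \sigma_m(k,j)) - \sum_{k=1}^{m-1} k(\{k(m-j)/m\} + \sigma_m(k,m-j))\bigr|$ is at most $\frac{(m-2)(m+5)}{6}$ after rescaling, then sum over the $d_i$ in the block and divide by $N$. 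Verifying the equality cases — that strict inequality holds in \eqref{generalizedeq1} unless the block is empty, and in \eqref{generalizedeq} unless $Y$ is hyperelliptic — will require tracking exactly when each elementary estimate is tight, which is the fiddly but routine part.
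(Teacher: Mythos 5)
Your treatment of \eqref{generalizedeq} is complete and coincides with the paper's argument: for $\gcd(i,N)=1$ the function $k\mapsto\sigma(k,i)$ vanishes at the single residue $k_0\equiv -i^{-1}\pmod N$, the twin replaces $k_0$ by $N-k_0$, and the per-point discrepancy $|N-2k_0|$ is at most $N-2$; this is exactly the paper's bound, phrased there as $|\lambda(i)-\lambda(N-i)|=|2\lambda(i)-N|\le N-2$.

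For \eqref{generalizedeq1}, however, what you give is a plan rather than a proof, and the missing step is precisely the substantive content of the inequality. Your reduction $i=\sigma j$, $N=\sigma m$, $\gcd(j,m)=1$ is sound: one checks that the level-$N$ difference of the bracketed sums equals exactly $\sigma$ times the level-$m$ difference (the cross terms vanish because $\sum_{r=1}^{m-1}(2\{rj/m\}-1)=0$, and the $\sigma(k,i)$-part contributes $\sigma(m-2k_1)$ with $k_1 j\equiv -1\pmod m$), so your one-variable lemma with the rescaling factor $\sigma$ would indeed yield the stated bound. But that one-variable estimate — that for $\gcd(j,m)=1$ the quantity $\bigl|\sum_{k=1}^{m-1}k\bigl(\{kj/m\}-\{k(m-j)/m\}\bigr)\bigr|$ is maximized at $j=1$ (or $m-1$), where it equals $\frac{(m-1)(m-2)}{6}$ — is nowhere established: you only say the bound ``should come out to'' the stated expression and that you ``expect'' the lemma to hold. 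This is not routine bookkeeping: since $\sum_{k=1}^{m-1}k(2\{kj/m\}-1)=2m\,s(j,m)$, the claim is equivalent to the classical, non-obvious bound $|s(j,m)|\le s(1,m)=\frac{(m-1)(m-2)}{12m}$ for Dedekind sums. The paper's proof supplies exactly this ingredient, by asserting that the maximum of $g_N^{\sigma}(i)$ over $i$ with $\gcd(i,N)=\sigma$ is attained at $i=\sigma$ or $N-\sigma$ and computing $g_N^{\sigma}(\sigma)=\frac{1}{6}\left(\frac{N}{\sigma}-1\right)(N-2\sigma)$, to which the $\sigma(k,i)$-term bound $N-2\sigma$ is added. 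Until you prove this maximization (for instance via the standard estimate on Dedekind sums, or a direct extremal argument), the second inequality remains unproved. A minor point: the equality discussion in your last paragraph is not needed for the lemma, which makes no equality claim; for $N=2$ both sides of both inequalities vanish since $Y$ is its own twin, and the equality analysis belongs to the proof of Theorem \ref{teoremaeta}.
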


\begin{proof} Let us begin with the contribution coming from the marked points. The left hand side of \eqref{generalizedeq} is equal to
\begin{equation} \label{equalto}
\frac{1}{N} \left| \sum a_i (\lambda(i)- \lambda(N-i)) \right|,
\end{equation}
where $\lambda(s)$ is the multiplicative inverse of $s$ modulo $N$. The maximum of the absolute value of \begin{displaymath}\lambda(i)- \lambda(N-i)= 2 \lambda(i) -N,\end{displaymath} is obtained when $i$ is either $1$ or $N-1$. 

As for the second inequality, we separate the two summands in the right hand side of \eqref{asigma}. For the first term, consider the function of $i$
\begin{displaymath}
g^{\sigma}_N(i):=\left|\sum_{k=1}^{N-1} k \left( \left\{ \frac{i k}{N}\right\}- \left\{ \frac{(N-i) k}{N}\right\}\right)\right|. 
\end{displaymath}
Its maximum among the values of $i$ such that $\gcd(i,N)=\sigma$ is obtained for $i=\sigma$ or for $i=N - \sigma$. From this, we obtain 
\begin{equation} \label{uno}
\left|\sum_{k=1}^{N-1} k  \left\{ \frac{i k}{N}\right\}- \left\{ \frac{(N-i) k}{N}\right\}\right| \leq g_N^{\sigma}(\sigma)= \frac{1}{6} \left(\frac{N}{\sigma}-1\right) (N- 2 \sigma).
\end{equation}
The second term is treated similarly to the contribution coming from the marked points. The maximum of the absolute value of 
\begin{displaymath} \label{due}
\sum_k( \sigma(k,i)- \sigma(k, N-i))=2i-N
\end{displaymath}
is obtained when $i$ is either $\sigma$ or $N- \sigma$. Combining this fact with \eqref{uno}, we get the desired inequality.
\end{proof}

\begin{proof} (of Theorem \ref{teoremaeta}) As we have already observed, it suffices to prove \eqref{centrale}.
By using the Riemann-Hurwitz formula \eqref{rh} to eliminate the variable $g$, the right hand side of \eqref{centrale} can be rearranged to:
\begin{displaymath}
(2N-3)g'-2 (N-2) + \sum_{\sigma \in \Sigma} (N- \sigma -1) \sum_{\gcd(i,N)= \sigma}  d_i.
\end{displaymath}
Now let us define for convenience the function:
\begin{equation}
\nonumber f_N(\sigma):=  (N- \sigma -1) - \frac{(N-2 \sigma) (\frac{N}{\sigma}+5)}{6 N}= \frac{\left( 6- \frac{1}{\sigma}\right) N^2 -  (6 \sigma + 9) N + 10 \sigma} {6 N}, 
\end{equation}
for any integer $N\geq 2$ and any $\sigma$ a real number between $1$ and $N/2$.
By using \eqref{generalizedeq} and \eqref{generalizedeq1}, in order to prove \eqref{centrale} it is enough to prove:
\begin{equation} \label{finalina}
%
- \frac{N-2}{N} \sum_{\gcd(i,N)=1} a_i+ \sum_{\sigma \in \Sigma} f_N(\sigma)  \sum_{\gcd(i,N)= \sigma}  d_i \geq (2-2g')(N-2)-g',
\end{equation}
with equality only in the case of the hyperelliptic twisted sector. Note that $a_i \leq d_i$ from inequality \eqref{markedpoints}. 

The left hand side of \eqref{finalina} is always nonnegative for any integer $N \geq 2$, because
\begin{displaymath}
\tilde{f_N}(1):= f_N(1)- \frac{N-2}{N}=\frac{(N-2)(5N-11)}{6N} \geq0.
\end{displaymath}
 Therefore when $g'>0$, the strict inequality \eqref{finalina} holds evidently, as the right hand side is strictly smaller than $0$. Thus all we have to prove is \eqref{finalina} when $g'=0$, 
a case in which we always have that $\sum d_i \geq 3$ (this follows from condition \eqref{rh} posing $g'=0$ and $g>0$).

We start with the case $g'=0$ and $\sum d_i \geq 4$. 
The function $f_N$ is concave (just look at its second derivative with respect to $\sigma$), thus it has its minimum either in $1$ or in $N/2$, 
and we have $f_N(N/2)= \frac{N-2}{2}$.
The following two inequalities hold in this case
\begin{align}
\tilde{f_N}(1) \sum d_i &\leq 2 (N-2), \label{first}\\
f_N(N/2) \sum d_i & \leq  2 (N-2), \label{second}
\end{align}
and they suffice to prove \eqref{finalina}. If \eqref{finalina} is an equality, then either \eqref{first} or \eqref{second} must be an equality. If $N$ equals $2$, we are precisely in the case of the hyperelliptic twisted sector. If $N>2$, the inequality \eqref{first} is strict, so \eqref{second} must be an equality and therefore $\sum d_i=4$. So if \eqref{finalina} is an equality, with $g'=0, N>2$ and $\sum d_i=4$, then $d_{\frac{N}{2}}=4$, but this implies $g=1$ by \eqref{rh}, hence $n \geq 1$, and this case does not exist because of \eqref{markedpoints}.
 
 So we are left with the case $g'=0$ and $\sum d_i=3$. A large number of twisted sectors still falls into this last category, but not the hyperelliptic twisted sector. We pose the three nonzero $d_i$'s to be $d_{\sigma_1}=d_{\sigma_2}=d_{\sigma_3}=1$. Then it suffices to prove the strict inequality
 
 \begin{equation} \label{finalina2}
 \left(6- \sum_{i=1}^3 \frac{1}{\sigma_i}\right)N^2- \left(3+ 6\sum_{i=1}^3 \sigma_i\right) N + 10 \sum_{i=1}^3 \sigma_i > 6 n (N-2).
 \end{equation}
  If $N$ is fixed, there are only finitely many possibilities for the variables involved in \eqref{finalina2}. The constraints are:
 \begin{equation} \begin{cases}\sigma_1 + \sigma_2 + \sigma_3 < N, \\ a \sigma_1 + b \sigma_2 + c \sigma_3 =N \quad \textrm{for some } a,b,c \in \mathbb{N}^+, \\ n \leq \left| \{i| \ \sigma_i=1\}\right| \leq 3, \\ \sigma_i \textrm{ divides } N, \sigma_i \neq N, \label{condizioni}
 \end{cases} \end{equation} where all the quantities involved are integers.
  The first is a consequence of Riemann-Hurwitz \eqref{rh} (assuming $g>1$), the second follows from \eqref{fan} and the third from \eqref{markedpoints}. From now on, we aim at proving \eqref{finalina2} for $N$ greater than a certain explicit constant. We will repeatedly use that the left hand side of \eqref{finalina2}, for fixed $n,N$, is a concave function in the domain of definition \eqref{condizioni}. We can also assume for convenience that $\sigma_1 \leq \sigma_2 \leq \sigma_3$.
 
 \begin{itemize} \item If $n=3$, then from \eqref{condizioni} we deduce $\sigma_1=\sigma_2=\sigma_3=1$. The inequality \eqref{finalina2} is satisfied when $N>11$.
 
 \item If $n=2$, from \eqref{condizioni}, we have that $\sigma_1=\sigma_2=1$. It is enough to check \eqref{finalina2} for the extreme values $\sigma_3=1$ and $\sigma_3=N/2$. The first follows from the case $n=3$, by checking the case of $\sigma_3=N/2$ we see that \eqref{finalina2} is valid when $N>22$.
 
 \item If $n=1$, from \eqref{condizioni} $\sigma_1=1$, so we have $1 \leq \sigma_2 \leq \sigma_3 \leq N/2$ and $\sigma_2+\sigma_3 < N-1$. It is enough to check the extremal values. The case when $\sigma_2=1$ follows from the case $n=2$. From the second point in \eqref{condizioni}, if $\sigma_3=N/2$, then $\sigma_2$ is either $1$ or $2$; in the latter case \eqref{finalina2} is valid when $N>14$. Finally, when $\sigma_2=\sigma_3=N/3$, \eqref{finalina2} is always valid.
 
 \item If $n=0$, we can assume $\sigma_i \geq 2$, since the other cases fall in the above paragraph. Moreover, there are six extremal cases that fulfill the first and the last of \eqref{condizioni}: \begin{displaymath} \hspace{1.1cm} \label{triple} (2,2,2), (2,2, \frac{N}{2}), (2,\frac{N}{3},\frac{N}{2}), (\frac{N}{7}, \frac{N}{3}, \frac{N}{2}), (\frac{N}{5}, \frac{N}{4}, \frac{N}{2}), (\frac{N}{4}, \frac{N}{3}, \frac{N}{3}). \end{displaymath} We check that \eqref{finalina2} for the extremal cases is satisfied when $N>36$ (the inequality is sharp in the case of the fourth triple).
 \end{itemize}
 
 To conclude the proof, we have to check that \eqref{centrale} holds in the cases when $g'=0$, $\sum d_i=3$ and $N<37$, which imply $g\leq 17$. These cases are only finitely many, and can be handled with the help of a computer program\footnote{The source code of a \texttt{C++} program that lists all twisted sectors of $\mathcal{M}_g$, each one with its age, is available at {\url{http://pcwww.liv.ac.uk/~pagani/twisted.cpp}}.}. \end{proof}
 
Let us conclude with some remarks.

\begin{remark} We list the number of twisted sectors with of $\mathcal{M}_g$ for $1 \leq g \leq 17$, to give an idea of its rapid growth:
\begin{displaymath}
(7,17,47,72,76,203,196,225,415,537,427,1040,811,779,1750,1860,1371).
\end{displaymath}
Then the number of twisted sectors of $\mathcal{M}_g$ with $g'=0$:
\begin{displaymath}
\hspace{-0.3cm}(7,16,43,65,64,193,163,207,372,485,359,983,657,866,1592,1636,1115).
\end{displaymath}
And, finally, the number of twisted sectors of $\mathcal{M}_g$ with $g'=0$ and $\sum d_i=3$:
\begin{displaymath}
\hspace{-1.1cm}(6,12,32,38,42,108,76,100,184,190,150,352,162,286,544,382,196).
\end{displaymath}
These final $2860$ twisted sectors are those for which we performed the computer assisted calculation mentioned in the last paragraph of the proof of Theorem \ref{teoremaeta}.
\end{remark}

\begin{remark} One can also ask what are the twisted sectors of small age in $\mathcal{M}_{g,n}$, after the marked hyperelliptic one. Here we list, for fixed $(g,n)$, the first twisted sectors in order of increasing age: marked hyperelliptic, marked bielliptic, \ldots,  (marked) double covers of curves of genus  $\lceil \frac{g}{2} \rceil$. We remark that the ranges of existence of those twisted sectors, in terms of $g$ and $n$, are, respectively
\begin{displaymath}
n \leq 2g+2, \ n \leq 2g-2, \ n \leq 2g-6,\ \ldots, \ n \leq 1+(-1)^g; 
\end{displaymath}  
their age is, respectively,
\begin{displaymath}
\frac{g-2+n}{2}, \ \frac{g-1+n}{2}, \ \frac{g+n}{2}, \ \ldots, \frac{3 \lfloor \frac{g}{2} \rfloor -1 - (-1)^g +n}{2}.
\end{displaymath}

After all these, there is one marked trigonal cyclic twisted sector (when $n \leq g+2$). Then the picture becomes more complicated, and we do not know the answer. For example, we have empirically observed that the minimum age among twisted sectors of codimension $k$ can be bigger than the minimum age among twisted sectors of codimension $k+1$.

The validity of the statements that we made in this remark require a long combinatorial proof along the lines of the proof of Theorem 1, which we do not include in this note as it is not really relevant to our scope.
\end{remark}

\begin{remark}
Condition \eqref{nonempty} has not been used in any of the steps of the proof of Theorem \ref{teoremaeta}, which could then have been stated slightly more generally for the twisted sectors of the moduli spaces of not necessarily connected  smooth curves of genus $g$.
\end{remark}

\begin{remark} There is no such thing as a stable cohomology in low degrees for $\overline{\mathcal{M}}_{g,n}$; it is a classical fact for example that even the second Betti number (which equals the dimension of the Picard group in this case) grows exponentially in $g$.  It still makes sense to ask for the twisted sector of minimum age of $\overline{\mathcal{M}}_{g,n}$, but the answer is much easier. To fix the ideas, we give the answer when $g+n>3$ and $g>0$: the cases in which the generalized hyperelliptic locus has codimension $>1$. Then the unique twisted sector of minimum age is the codimension-$1$ locus, consisting generically of a smooth elliptic curve glued at the origin to a smooth curve of genus $g-1$ carrying all the marked points, and with the automorphism induced by the pair (elliptic involution on the elliptic curve, identity). Its age is $\frac{1}{2}$: half its codimension.
\end{remark}

\end{document}